\theoremstyle{plain}
\newtheorem{thm}{Theorem}[section]
\newtheorem{lem}[thm]{Lemma}
\newtheorem{prop}[thm]{Proposition}
\newtheorem{cor}[thm]{Corollary}
\theoremstyle{definition}
\newtheorem{defn}[thm]{Definition}
\newtheorem{Q}[thm]{Question}
\theoremstyle{remark}
\newtheorem{rem}[thm]{Remark}
\newcommand{\Spec}{\textrm{Spec}}
\newcommand{\Sym}{\textrm{Sym}}
\newcommand{\lra}{\longrightarrow}
\begin{document}

\title{Singularities of the nested Hilbert scheme of points of length 3,4}

\author{Doyoung Choi} 
\date{\today}

\address{Doyoung Choi \\
School of Mathematics, Korea Institute for Advanced Study (KIAS)
85 Hoegi-ro, Dongdaemun-gu, Seoul 02455,Republic of Korea}
\email{cdy4019@kias.re.kr}

\begin{abstract} 
    We show that the projection morphism $X^{[3,4]} \lra X^{[3]}$ is flat even if it has reducible fiber. After analyzing blow-up constructions related to $X^{[3,4]}$, we conclude that $X^{[3,4]}$ has canonical Gorenstein singularities. As a corollary, we specify the singularities of several nested Hilbert schemes.
\end{abstract}

\keywords{Hilbert scheme, nested Hilbert scheme, Canonical singularity, Gorenstein}
\subjclass[2020]{Primary:14B05, 14C05; Secondary:14J17}

\maketitle
\section{Introduction}

We assume that $X$ is a smooth complex projective variety of dimension $n$. Let $X^{[k]}$ denote the Hilbert scheme of $k$-points, which parameterizes length $k$ subscheme of $X$. For $n \leq 2$ or $k \leq 3$, $X^{[k]}$ is a complex projective manifold. A natural generalization is the nested Hilbert scheme. Let $\ell > 1$ and $k_1 < \cdots < k_{\ell}$ be positive integers. We denote by $X^{[k_1, \cdots, k_{\ell}]}$ the nested Hilbert scheme parameterizing nested sequence of subschemes. Set-theoretically, it is given by 
  $$ \{ (\xi_1, \cdots, \xi_{\ell}) \in X^{[k_1]} \times \cdots \times X^{[k_{\ell}]} | \xi_1 \subset \cdots \subset \xi_{\ell}  \}. $$
   The nested Hilbert scheme $X^{[k_1, \cdots, k_{\ell}]}$ forms a closed subscheme of the product $X^{[k_1]} \times \cdots \times X^{[k_{\ell}]}$. When $\ell = 2$, we refer to the corresponding scheme as a two-step nested Hilbert scheme, denoted by $X^{[k_1,k_2]}$, which parameterizes pairs of subschemes $(\xi_{k_1}, \xi_{k_2})$ such that $\xi_{k_1} \subset \xi_{k_2}$. A fundamental example is the two-step nested Hilbert scheme $X^{[1,k]}$, which coincides with the universal family $\mathcal{Z}_k$ of length $k$ subschemes of $X$. It consists of pairs $(x, \xi_k)$ where $x \in \textrm{Supp} (\xi_k)$.

  In \cite{Cheah.98}, smooth nested Hilbert scheme over $\mathbb{C}$ is classified. $X^{[k_1, \cdots,  k_{\ell}]}$ is smooth only when one of following conditions are satisfied :
\begin{enumerate}
    \item[(1)] $X$ is a curve.
    \item[(2)] $X$ is a surface, $\ell = 2$ and $k_{2} = k_1+1$.
    \item[(3)] If $\dim X \geq 3$, $\ell = 2, k_2 = k_1 + 1$ and $k_1 \leq 2$.
\end{enumerate}

 These results were established through the calculation of the dimensions of tangent spaces of nested Hilbert schemes. Although we can not directly generalize previous methods, there has been progress in understanding the geometry of singular nested Hilbert scheme of surfaces. It was shown in \cite{Ryan.Taylor.22} that the nested Hilbert scheme  $X^{[k,k+1,k+2]}$ has canonical Gorenstein singularities. 
Additionally, \cite{Ram.Sam.24} showed that $X^{[2, k]}$ has rational singularities by showing the flatness of the morphism $X^{[2,k]} \lra X^{[2]}$.

 For a smooth surface $X$, consider the universal family $\mathcal{Z}_k \cong X^{[1,k]}$ of $X^{[k]}$. It is known that $\mathcal{Z}_k$ is normal, Cohen-Macaulay(cf.\cite{Fogarty.73}) and has has non-$\mathbb{Q}$-Gorenstein rational singularities by \cite{Song.16}. 
 For higher dimensional smooth complex projective variety $X$, we consider the morphism $(\textrm{res}_{2,3}, \pi_2) : X^{[2,3]} \lra \mathcal{Z}_3$, naturally defined by the residual map $\textrm{res}_{2,3} : X^{[2,3]} \lra X$ and the second projection $\pi_{2} : X^{[2,3]} \lra X^{[3]}$. 
  It is known that $(\textrm{res}_{2,3}, \pi_2)$ is a small birational morphism. (cf. \cite{Gottsche}) Since $\mathcal{Z}_3 \lra X^{[3]}$ is finite flat, $\mathcal{Z}_3$ is normal and Cohen-Macaulay. Note that canonical sheaves of $X^{[2,3]}$ and $\mathcal{Z}_3$ are reflexive. By Kempf's criterion(cf. \cite{KKMSD.73}), $\mathcal{Z}_3$ has rational singularities.

One may also study the flatness and singularities of the first projection map  $ \pi_1 : X^{[k,k+1]} \lra X^{[k]}$. However, the fiber of $\pi_1$ can be reducible for $k \geq 3$, and its dimension is not constant for $k \geq 4$ when $\dim X = 2$. A similar phenomenon occurs in higher-dimensional cases, implying that $\pi_1$ cannot be flat for $k \geq 4$. Nonetheless, flatness does hold for $k = 3$.

\begin{prop} \label{main flat}
Let 
$X$ be a smooth projective variety. The first projection map $ \pi_1 : X^{[3,4]} \lra X^{[3]}$ is flat. 
\end{prop}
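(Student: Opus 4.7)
The plan is to apply the miracle flatness criterion: a morphism from a Cohen--Macaulay scheme to a regular scheme is flat if and only if its fibers are equidimensional of dimension equal to the relative dimension. Since $X^{[3]}$ is smooth of dimension $3n$ for every smooth $X$ by \cite{Cheah.98}, the proposition reduces to two claims:
\begin{enumerate}
\item[(a)] $X^{[3,4]}$ is Cohen--Macaulay of pure dimension $4n$;
\item[(b)] every fiber of $\pi_1$ has pure dimension $n$.
\end{enumerate}

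For (b), I would identify the set-theoretic fiber $\pi_1^{-1}(\xi)$ with the Quot scheme of length-one quotients of the ideal sheaf $\mathcal{I}_\xi$: a surjection $\mathcal{I}_\xi \twoheadrightarrow k(x)$ is determined up to scalar by a nonzero vector in $\mathcal{I}_\xi \otimes k(x)$, so the sub-locus over $x \in X$ is $\mathbb{P}(\mathcal{I}_\xi \otimes k(x))$, whose dimension is controlled by Nakayama via the minimal number of generators of $\mathcal{I}_\xi$ at $x$. Stratifying $X^{[3]}$ by the local type of $\xi$ --- three distinct reduced points, a double point plus a reduced point, a curvilinear triple point, or a planar triple point --- one computes the fiber in each case and verifies that it is $n$-dimensional. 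Reducibility, as flagged in the introduction, appears precisely at planar triple points, where an exceptional $\mathbb{P}^{n}$ is glued to the $n$-dimensional main component along a $\mathbb{P}^{n-1}$.

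The main obstacle is (a). Since $X^{[3,4]}$ is singular as soon as $n \geq 3$, its Cohen--Macaulayness has to be established by hand. Following the abstract's indication that blow-up constructions will be analyzed, the natural strategy is to realize $X^{[3,4]}$ as a blow-up (or a small modification of one) of the smooth scheme $X^{[3]} \times X$ along a Cohen--Macaulay center closely related to the universal subscheme $\mathcal{Z}_3$, and to conclude Cohen--Macaulayness from an explicit analysis of the defining ideal. A more local alternative is to work \'etale-locally at $\xi \in X^{[3]}$ supported at a single point of $X$, to describe $X^{[3,4]}$ as a determinantal subscheme of a smooth ambient space via its Quot description, and to exhibit a regular sequence of the expected codimension. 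Once (a) and (b) are in place, miracle flatness yields the flatness of $\pi_1$.
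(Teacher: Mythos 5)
Your part (b) is essentially the paper's Proposition \ref{fiber lci}: one stratifies $X^{[3]}$ by the local type of $\eta$ and computes each fiber as $\mathbf{Proj}\,\mathrm{Sym}^{\bullet}\mathcal{I}_{\eta/X}$, finding an $n$-dimensional fiber everywhere, with the reducible fiber $B_n\cup P_n$ appearing exactly at the planar (non-curvilinear) triple points. One small correction: the exceptional $\mathbb{P}^n$ meets the blow-up component along the (rank $3$) quadric $u_0u_2=u_1^2$, the projectivized normal cone of $(x_1^2,x_1x_2,x_2^2,x_3,\dots,x_n)$, not along a $\mathbb{P}^{n-1}$; this does not affect the dimension count.

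The genuine gap is part (a). Miracle flatness requires $X^{[3,4]}$ to be Cohen--Macaulay at the points lying over the non-curvilinear stratum, which is precisely where $X^{[3,4]}$ is singular for $n\ge 3$, and this is not available a priori: the paper's logical order is the reverse of yours, in that flatness of $\pi_1$ is proved first and the Gorenstein (hence Cohen--Macaulay) property of $X^{[3,4]}$ is then \emph{deduced} from the fact that $\pi_1$ is a flat morphism with Gorenstein fibers over a smooth base. Your two suggested routes to (a) are not carried out and are not obviously feasible: a priori one only knows $X^{[3,4]}\cong\mathbf{Proj}\,\mathrm{Sym}^{\bullet}\mathcal{I}_{\mathcal{Z}_3}$ (Theorem \ref{Kleiman}), while the identification with the blow-up $\mathrm{Bl}_{\mathcal{Z}_3}(X\times X^{[3]})$, i.e.\ with $\mathbf{Proj}$ of the Rees algebra, is itself obtained in the paper only as a consequence of flatness; and since $\mathcal{Z}_3$ has codimension $n\ge 3$ in $X\times X^{[3]}$, neither Hilbert--Burch nor any standard determinantal argument applies to $\mathcal{I}_{\mathcal{Z}_3}$. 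Where miracle flatness \emph{does} apply --- over the strata of types (a) and (b), where $X^{[3,4]}$ is smooth --- the paper uses exactly that (citing \cite[Theorem 23.1]{Matsumura.89}); at the remaining stratum it instead proves constancy of the Hilbert polynomial of the fibers, either by specializing along a curve in $(\mathbb{P}^2)^{[3]}$ through $[\eta]$ and comparing the flat limit with the known surface fiber, or by directly computing and matching the Hilbert polynomials of $B_n\cup P_n$ and of the fiber over three distinct points with respect to a relatively ample line bundle. To repair your argument you would either have to supply an independent proof that $\mathrm{Sym}^{\bullet}\mathcal{I}_{\mathcal{Z}_3}$ is Cohen--Macaulay or fall back on one of these Hilbert-polynomial arguments.
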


This proposition provides a basic example of the deformation of a smooth variety into a scheme with two components. By examining the singularities of the fiber of $\pi_1$, we establish the following theorem.

\begin{thm} \label{main}
 Let $X$ be a smooth projective variety. $X^{[3,4]}$ has canonical Gorenstein singularities. 
\end{thm}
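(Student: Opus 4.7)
The plan is to leverage the flatness of $\pi_1 : X^{[3,4]} \to X^{[3]}$ from Proposition~\ref{main flat}, together with the smoothness of $X^{[3]}$ (which holds for any $n$ since $k \leq 3$), to transfer the Gorenstein property to $X^{[3,4]}$ via fiberwise analysis, and then to build an explicit resolution to check canonicity. The fiber of $\pi_1$ over $\xi \in X^{[3]}$ is $F_\xi := \textrm{Proj}_X(\Sym^\bullet I_\xi)$, the scheme of length-one quotients of $I_\xi$; from a free presentation $\mathcal{O}_X^{\oplus a} \to \mathcal{O}_X^{\oplus b} \to I_\xi \to 0$ one realizes $F_\xi$ inside $\mathbb{P}^{b-1}_X$ as the vanishing locus of $a$ equations linear in the fiber coordinates.

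Stratifying $X^{[3]}$ by the local type of $\xi$ (three distinct points, two coincident plus one, curvilinear length three, fat-point configurations), I plan to verify directly that each $F_\xi$ is a local complete intersection in the smooth ambient $\mathbb{P}^{b-1}_X$, and hence Gorenstein, even in the reducible cases where $F_\xi$ is the union of a ``blow-up'' component birational to $X$ and an exceptional projective-bundle component supported over $\textrm{Supp}(\xi)$. Since $\pi_1$ is flat with Gorenstein fibers over the smooth base $X^{[3]}$, the relative dualizing sheaf is invertible and $\omega_{X^{[3,4]}} \cong \omega_{X^{[3,4]}/X^{[3]}} \otimes \pi_1^{*} \omega_{X^{[3]}}$ gives Gorensteinness of $X^{[3,4]}$. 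Normality then reduces by Serre's criterion ($S_2$ follows from Gorenstein) to $R_1$, which amounts to checking that the singular locus -- isolated ADE-type points on the blow-up component for curvilinear $\xi$, and the intersection locus of the two components for fat-point $\xi$ -- has codimension at least two in $X^{[3,4]}$.

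For canonicity, I will construct a resolution using the natural birational morphism $\Phi : X^{[3,4]} \to X^{[3]} \times X$, $(\xi_3, \xi_4) \mapsto (\xi_3, \textrm{res}(\xi_4/\xi_3))$, which is an isomorphism off the universal family $\mathcal{Z}_3$ and identifies $X^{[3,4]}$ with the dominant component of $\textrm{Proj}_{X^{[3]} \times X}(\Sym^\bullet I_{\mathcal{Z}_3})$. Resolving the singularities identified above by further blow-ups along the non-reduced strata of $X^{[3]}$ (reflecting the componentwise geometry of $F_\xi$) should yield a smooth model $\widetilde{Y} \to X^{[3,4]}$, and I then compute the discrepancies of the exceptional divisors stratum by stratum to show they are non-negative.

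The hard part will be this last step: keeping track of the discrepancy contributions as the type of $\xi$ varies, particularly for higher-dimensional $X$ where the array of local types of $\xi$ (and of length-four schemes containing $\xi$) is richer, demands a uniform and delicate analysis. A secondary difficulty is verifying the local complete intersection property uniformly in the fiberwise step, since the minimal free resolution of $I_\xi$ has varying length across the strata, so the defining equations of $F_\xi$ must be produced case by case and checked to form a regular sequence in each local model.
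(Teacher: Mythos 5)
Your first half follows the paper's strategy (flatness of $\pi_1$ plus Gorenstein fibers over the smooth base $X^{[3]}$ yields Gorensteinness of $X^{[3,4]}$), but the mechanism you propose for the fiberwise step fails at the crucial stratum. Over a non-curvilinear $\eta$ (locally $(x_1^2,x_1x_2,x_2^2,x_3,\dots,x_n)$), the fiber $F_\eta$ is \emph{not} a local complete intersection: in the chart computed in the paper it is cut out by the five quadrics $x_1u_1-x_2u_0,\ x_1u_2-x_2u_1,\ x_1^2-x_nu_0,\ x_1x_2-x_nu_1,\ x_2^2-x_nu_2$ in a six-dimensional ambient space, an ideal of height $3$ whose minimal number of generators at the distinguished point is $5$ (the five quadratic parts are linearly independent modulo $\mathfrak{m}^3$, so Nakayama forbids cutting down to a regular sequence). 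So ``check l.c.i.\ in $\mathbb{P}^{b-1}_X$'' cannot work uniformly; the Gorenstein property of this fiber has to be established by another route (the paper resorts to an explicit Macaulay2 verification of the local ring). Without this, your Gorensteinness of $X^{[3,4]}$ is not yet proved.

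The second and larger gap is canonicity. You propose to build an explicit resolution and compute discrepancies stratum by stratum, and you yourself flag this as the hard part --- but no mechanism is given for producing the smooth model or controlling the discrepancies as the local type of $\eta$ varies, so this is a plan rather than a proof. The paper avoids discrepancy computations entirely: it proves that $X^{[3,4]}$ has \emph{rational} singularities by passing to the faithfully flat cover $X^{[1,3,4]} \cong X^{[1,3]}\times_{X^{[3]}}X^{[3,4]}$ (so Boutot's theorem descends rationality), realizing the relevant space up to small birational modification as the two-step blow-up $\textrm{Bl}_{\widetilde{\Gamma}}\textrm{Bl}_{\mathcal{V}}(X\times X^{[2,3]})$, showing that this two-step blow-up fibers over $X^{[2,3]}$ with fibers that are smooth or total spaces of $\mathcal{O}(-1)$ over a quadric (hence have rational singularities, so Elkik's deformation theorem applies), and then comparing with the single blow-up via a small birational morphism and Kempf's criterion. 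Canonicity is then immediate from Elkik's theorem that a Gorenstein variety with rational singularities has canonical singularities. If you want to salvage your outline, replacing the discrepancy computation with a proof of rational singularities plus the Gorenstein property you establish in the first half is the efficient path.
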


    The structure of this paper is organized as follows. 
    In section 2, we introduce basic notions and constructions of nested Hilbert schemes. It is necessary to prove the flatness of the morphism $\pi_1$ and to verify that each fiber of $\pi_1$ is Gorenstein. We achieve this by computing the defining equations of the fibers based on the defining equation of non-reduced subscheme of $X$. 
    
   In section 3, We give a direct computation of the Hilbert polynomials of the fibers $\pi_1^{-1}([\eta])$ for the flatness of $\pi_1$. Before the proof, we need to construct an embedding of $X^{[3,4]}$ into $X^{[3]} \times \mathbb{P}^N$ for some $N$. This approach provides extrinsic understanding of the nested Hilbert schemes.

 In section 4, we verify the rational singularities of $X^{[3,4]}$ by examining those of $X^{[1,3,4]}$ via  \cite{Boutot.87}. Since $X^{[1,3,4]}$ is not easy to handle directly, we instead study $X^{[2,3,4]}$. We construct a two-step blow-up to resolve the singularities of $X^{[2,3,4]}$, producing  variety with potentially milder singularities. Furthermore, we provide partial generalizations of the results in \cite{Ryan.Taylor.22}, regarding on the singularities of $X^{[1,2,3]}, X^{[2,3,4]}, X^{[1,3,4]}$ and $X^{[1,2,3,4]}$.

 In section 5, we suggest open questions concerning the singularities of $X^{[4]}, \mathcal{Z}_4, X^{[2,4]}$ and $X^{[1,2,4]}$ motivated by the results in \cite{Katz.94}, \cite{Song.16} and \cite{Ram.Sam.24}.

\noindent \textbf{Acknowledgments.} This research was partially supported by the Institute for Basic Science (IBS-R032-D1) and the author would like to thank to Prof.Yongnam Lee for his guidance. I would like to thank to Prof.Joachim Jelisiejew for the reference \cite{Katz.94}. I also would like to thank to Mr.Erik Nikolov in Leibniz University for the reference \cite{Kleiman.90} and \cite{Lehn.04}.

\section{Computation of the fibers of $\pi_1$} \label{section 2}
\subsection{Structure of the two-step nested Hilbert scheme}
 Throughout this section, we assume $X$ is a smooth complex projective variety of dimension $n$, unless  otherwise stated. The two-step nested Hilbert scheme $X^{[k,k+1]}$ is equipped with two flat families $\mathcal{V} \subset \mathcal{W} \lra X^{[k,k+1]}$ of degree $k$ and $k+1$, respectively, whose fibers are closed subschemes of $X$. Since we regard $X^{[k,k+1]}$ as a closed subscheme of the product $X^{[k]} \times X^{[k+1]}$, there exist two natural projections
 $$\pi_{1,k,k+1}: X^{[k,k+1]} \lra X^{[k]} \; \textrm{and} \;\pi_{2,k,k+1}: X^{[k,k+1]} \lra X^{[k+1]}.$$ When the value of $k$ is clear, we typically simplify these projections by omitting $k$ and $k+1$, denoting them as $\pi_1$ and $\pi_2$. Here, we have the identification
 $$ \mathcal{V} \cong \pi_{1,X}^{-1}(\mathcal{Z}_k) \quad \textrm{and} \quad \mathcal{W} \cong \pi_{2,X}^{-1}(\mathcal{Z}_{k+1}),$$
where $\pi_{1,X}$ and $\pi_{2,X}$ are the natural projection morphism associated with the universal families $\mathcal{Z}_{k}$ and $\mathcal{Z}_{k+1}$, respectively.

 Denote the residual morphism $\textrm{res}_{k,k+1} : X^{[k,k+1]} \lra X$ by sending $([\eta],[\xi])$ to the support of the colon ideal $(I_{\xi} : I_{\eta})$(i.e. to the residual point of $\xi$ with respect to $\eta$). Let $\Gamma \subset X \times X^{[k,k+1]}$ denote the graph of $\textrm{res}_{k,k+1}$. Set theoretically, one has  $\mathcal{W} = \mathcal{V} \cup \Gamma$. We often omit the index and just denote by $\textrm{res}$ for residual morphisms.

\begin{comment}
 For the case $k = 1$, $\pi_1 = \pi_{1,1,2}$ is the composition of the morphisms 
 $$\textrm{Bl}_{\Delta(X)} X \times X \lra X \times X \lra X,$$ 
  where $\textrm{Bl}_{\Delta(X)} X \times X$ denotes the  blow-up of $X \times X$ along the diagonal $\Delta(X)$. In this case, each fiber of $\pi_1$ is a blow-up of $X$ along a closed point $x$, which is smooth. Hence, $\pi_1$ is flat when $k = 1$. 
\end{comment}

  From now, we identify $X^{[k,k+1]}$ with the projective spectrum as follows:

  \begin{thm} (\cite[Theorem 2.8]{Kleiman.90}) \label{Kleiman}
      Let $X$ be a projective variety. Then, two-step nested Hilbert scheme $X^{[k,k+1]}$ is isomorphic with 
      $$\mathbb{P}(\mathcal{I}_{\mathcal{Z}_k}) := \textbf{Proj} \; \textrm{Sym}^{\bullet} \mathcal{I}_{\mathcal{Z}_k/X \times X^{[k]}}.$$
  \end{thm}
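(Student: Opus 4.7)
The plan is to prove the isomorphism via Yoneda, showing that both sides represent the same functor on the category of schemes. A $T$-point of $X^{[k,k+1]}$ is a pair $\mathcal{V} \subset \mathcal{W} \subset X \times T$ of closed subschemes flat over $T$ of relative length $k$ and $k+1$ respectively. Under the Grothendieck convention $\mathbb{P}(\mathcal{F}) = \textbf{Proj}\, \Sym^{\bullet}\mathcal{F}$, a $T$-point of $\mathbb{P}(\mathcal{I}_{\mathcal{Z}_k})$ consists of a morphism $q = (r,\eta) : T \to X \times X^{[k]}$ together with a surjection $q^{\ast}\mathcal{I}_{\mathcal{Z}_k} \twoheadrightarrow \mathcal{L}$ onto an invertible $\mathcal{O}_T$-module.

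For the forward direction, given $(\mathcal{V},\mathcal{W})$, I would form the exact sequence
\[
0 \to \mathcal{I}_{\mathcal{W}} \to \mathcal{I}_{\mathcal{V}} \to \mathcal{I}_{\mathcal{V}}/\mathcal{I}_{\mathcal{W}} \to 0.
\]
Since $\mathcal{O}_{\mathcal{W}}/\mathcal{O}_{\mathcal{V}}$ is flat of relative length $1$ over $T$, its support is the graph $\Gamma_r \subset X \times T$ of a unique section $r : T \to X$ (the residual section), and the quotient $\mathcal{I}_{\mathcal{V}}/\mathcal{I}_{\mathcal{W}}$ becomes a line bundle on $\Gamma_r \cong T$. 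Pulling the surjection $\mathcal{I}_{\mathcal{V}} \twoheadrightarrow \mathcal{I}_{\mathcal{V}}/\mathcal{I}_{\mathcal{W}}$ back along $\Gamma_r \hookrightarrow X \times T$ produces the desired invertible quotient of $q^{\ast}\mathcal{I}_{\mathcal{Z}_k}$, where $\eta : T \to X^{[k]}$ is the classifying map of $\mathcal{V}$. The inverse construction takes $(q,\phi)$ to the subscheme $\mathcal{W} \subset X \times T$ cut out by the kernel of the composite surjection $\mathcal{I}_{\mathcal{V}} \twoheadrightarrow (\Gamma_r)_{\ast}\mathcal{L}$ obtained by extending $\phi$ via pushforward along the graph of $r$.

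The main obstacle will be verifying that $\mathcal{W}$ produced by the inverse construction is flat over $T$ of the correct relative length, and that the construction is compatible with arbitrary base change. Flatness should follow from the local criterion applied to the defining short exact sequence, using flatness of $\mathcal{V}$ and of $(\Gamma_r)_{\ast}\mathcal{L}$ over $T$; the $\mathrm{Tor}^1$ obstruction vanishes because $\mathcal{L}$ is locally free on $\Gamma_r \cong T$, while the length count follows by additivity. A subtler point arises when the residual section $r$ lies in the support of $\mathcal{V}$, i.e.\ when $\xi$ thickens $\eta$ at an existing point: here the invertible quotient of $q^{\ast}\mathcal{I}_{\mathcal{Z}_k}$ encodes the direction of thickening, and it is precisely the use of $\Sym^{\bullet}$ (rather than the Rees algebra) that makes $\mathbb{P}(\mathcal{I}_{\mathcal{Z}_k})$, not the blow-up of $X \times X^{[k]}$ along $\mathcal{Z}_k$, the correct parameter space. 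Once both constructions are shown to be functorial in $T$ and mutually inverse, Yoneda produces the claimed isomorphism of schemes.
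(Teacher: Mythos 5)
The paper does not actually prove this statement---it is quoted from Kleiman's Theorem 2.8---and your functor-of-points argument is essentially the proof given in that reference: identify $T$-points of $\textbf{Proj}\,\textrm{Sym}^{\bullet}\mathcal{I}_{\mathcal{Z}_k}$ with invertible quotients of $q^{*}\mathcal{I}_{\mathcal{Z}_k}$ and match them with the residual data of a nested pair. The outline is correct; the one step you use silently is the identification $(\mathrm{id}_X\times\eta)^{*}\mathcal{I}_{\mathcal{Z}_k}\cong\mathcal{I}_{\mathcal{V}}$ (which requires flatness of $\mathcal{Z}_k$ over $X^{[k]}$, and likewise guarantees base-change compatibility), the sheaf you call $\mathcal{O}_{\mathcal{W}}/\mathcal{O}_{\mathcal{V}}$ should be $\ker(\mathcal{O}_{\mathcal{W}}\to\mathcal{O}_{\mathcal{V}})=\mathcal{I}_{\mathcal{V}}/\mathcal{I}_{\mathcal{W}}$, and your closing observation that $\textrm{Sym}^{\bullet}$ rather than the Rees algebra is what makes the statement hold at points where $\xi$ thickens $\eta$ is exactly the right one.
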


 \cite{Kleiman.90} and \cite{Ellingsrud.Stromme} utilized the two-step nested Hilbert scheme $X^{[k,k+1]}$ as an inductive tool for the cycle-theoretic computation of the Hilbert scheme of points. In this context, the two-step nested Hilbert scheme is often referred to as the incidence correspondence due to its role in relating subschemes of different lengths. \cite{Lehn.04} addressed and filled a gap of Fogarty's proof concerning the connectedness of the Hilbert scheme of point $X^{[k]}$.

 The projective spectrum construction reflects the equational properties of the two-step nested Hilbert schemes and proves useful when $X$ is a smooth complex surface. The universal family $\mathcal{Z}_k \subset X \times X^{[k]}$ has codimension 2, enabling the application of the Hilbert-Burch theorem, as shown in \cite{Ellingsrud.Stromme}, \cite{Song.16}, \cite{Ryan.Taylor.22} and \cite{Ram.Sam.24}. Furthermore, it is known that $X^{[k,k+1]}$ is isomorphic to the blow-up $\textrm{Bl}_{\mathcal{Z}_k} X \times X^{[k]}$ in this case.

\subsection{Minimal number of defining equations for the fibers of $\pi_1$} \label{section nummingens}

 On the other hand, consider the fiber of the morphism $\mathbb{P}(\mathcal{I}_{\mathcal{Z}_k}) \lra X \times X^{[k]}$. If the minimal number of generators of $\mathcal{I}_{\eta/X} \otimes k(p)$ exceeds $n+1$, the dimension of a projective space $\mathbb{P}(\mathcal{I}_{\eta/X} \otimes k(p))$ is greater than $n$, and consequently $\pi_{1}$ is no longer flat. For example, consider the case when $X$ is a smooth surface and $k$ is larger than $4$. There exists a punctual subscheme $\eta$ of length $k$ whose minimal number of local generator exceeds $3$. Then $\pi_1 : X^{[k,k+1]} \lra X^{[k]}$ is no longer flat. However, if the minimal number of local generators of $\mathcal{I}_{\eta/X} \otimes k(p)$ is exactly $3$, it may still be possible for $\pi_{1}$ to remain flat at such points of $X^{[k]}$, especially when $k=3$. We start from this observation.

\subsection{\'{E}tale local morphism to affine space}

  We show that the fibers of $\pi_1$ is Gorenstein and then show that $\pi_1$ is flat. The defining equations of the fiber of $\pi_1$ is obtained from the equations that define $\eta$ in $X$. For a direct computation, we refer \cite[Lemma 2.6]{Ram.Sam.24} for the followings. We identify a neighborhood $U \subset X$ of $\eta$ with a disjoint union of affine spaces $\mathbb{A}^n$ through an \'{e}tale morphism $ \varphi : U \lra \mathbb{A}^n \sqcup \cdots \sqcup \mathbb{A}^n=:A$. (It should be note that $U$ and $\varphi(U)$ may not necessarily be connected.) We can adjust $U$ and $\varphi$ so that, for any finite-length scheme $\eta'$ contained in $U$, the composition $\eta' \subset U \lra A$ remains a closed immersion. Consequently, we can apply \cite[Lemma 4.4]{Beh.Fan.08} with sufficient projective completion. By the argument in \cite[Lemma 4.4]{Ram.Sam.24}, we can identify $A^{[k]}$ \'{e}tale locally with a product  $ \Pi_{i}(\mathbb{A}^{n})^{[k_i]}$, where $\eta$ is the sum of punctual schemes of length $k_i$. 
 Thus, we may assume that $\eta$ is a punctual scheme and set $A = \mathbb{A}^n$.

There exist induced morphisms $V \lra (\mathbb{A}^n)^{[k]}$ and $V' \lra (\mathbb{A}^n)^{[k+1]}$for some open neighborhoods $V$ and $V'$ of $[\eta] \in X^{[k]}$ and $[\xi] \in X^{[k+1]}$, respectively. By a similar argument, we also have an induced morphism $W \lra (\mathbb{A}^n)^{[k,k+1]}$ for an open neighborhood $W$ of $([\eta], [\xi]) \in \pi_1^{-1}(V) \cap \pi_2^{-1}(V') \subset  X^{[k,k+1]}$.

 Since the compositions $\eta \subset X \lra \mathbb{A}^n$ and $\xi \subset X \lra \mathbb{A}^n$ are closed immersion, the residual point of the pair $([\varphi(\eta)], [\varphi(\xi)])$ can be identified with the residual point of the pair $([\eta], [\xi])$ via $\varphi : U \lra \mathbb{A}^n$. In particular, we obtain the following commutative diagrams :

 \begin{equation} \label{comm.diag.with affine space}
\xymatrix@=30pt{
 W \subset X^{[k,k+1]} \ar[r]^-{\pi_1} \ar[d]& V \subset X^{[k]} \ar[d] &  W \subset X^{[k,k+1]} \ar[r]^-{\textrm{res}_{k,k+1}} \ar[d] & U \subset X \ar[d]^{\varphi} \\
(\mathbb{A}^n)^{[k,k+1]} \ar[r]^-{\pi_1} & (\mathbb{A}^n)^{[k]} & (\mathbb{A}^n)^{[k,k+1]} \ar[r]^-{\textrm{res}_{k,k+1}} & \mathbb{A}^n
   }
\end{equation}

\subsection{Direct computation of the fibers of $\pi_1$}

 The main strategy is to regard $X$ as an affine space $\mathbb{A}^n$ near $\eta$ and directly compute the defining equations of the blow-ups. From now, we set $k = 3$.

\begin{prop} \label{fiber lci}
 The fiber $\pi_1^{-1}([\eta])$ is a Gorenstein scheme.
\end{prop}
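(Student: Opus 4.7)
The plan is to reduce to an affine local computation and then check Gorenstein-ness chart-by-chart by explicitly writing down the defining equations. Using the \'{e}tale-local identification from the preceding subsection together with the commutative diagrams in (\ref{comm.diag.with affine space}), we may assume $X=\mathbb{A}^n$ and that $\eta$ is a punctual length-$3$ subscheme supported at the origin. Theorem \ref{Kleiman} then identifies the fiber with $\mathbb{P}(\mathcal{I}_\eta)=\textbf{Proj}\,\textrm{Sym}^\bullet \mathcal{I}_\eta$, a closed subscheme of $\mathbb{A}^n\times\mathbb{P}^{m-1}$ cut out by the linearized first syzygies of $\mathcal{I}_\eta$, where $m$ is the minimal number of generators.

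Up to a linear change of coordinates $\mathcal{I}_\eta$ is of one of two types: the \emph{curvilinear} ideal $(x_1^3,x_2,\ldots,x_n)$, generated by a regular sequence of length $n$; or the \emph{planar triple} ideal $(x_1^2,x_1x_2,x_2^2,x_3,\ldots,x_n)$, with $n+1$ minimal generators. In the curvilinear case the generators form a regular sequence, so by the Micali--Ferrand theorem $\textrm{Sym}^\bullet\mathcal{I}_\eta = \textrm{Rees}\,\mathcal{I}_\eta$ and the fiber is the ordinary blow-up $\textrm{Bl}_\eta\mathbb{A}^n \subset \mathbb{A}^n\times\mathbb{P}^{n-1}$, cut out by the $2\times 2$ minors of $\bigl(\begin{smallmatrix} x_1^3 & x_2 & \cdots & x_n\\ u_1 & u_2 & \cdots & u_n\end{smallmatrix}\bigr)$. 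A direct computation shows the chart $\{u_1\neq 0\}$ is smooth and each chart $\{u_k\neq 0\}$ with $k\geq 2$ is the hypersurface $\{x_1^3-x_k u_1=0\}$ in affine space; every chart is therefore a local complete intersection and hence Gorenstein.

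In the planar case, writing $g_1=x_1^2$, $g_2=x_1 x_2$, $g_3=x_2^2$ and introducing degree-$1$ variables $u_1,u_2,u_3,t_3,\ldots,t_n$ for the $n+1$ generators, the first syzygies give the linear relations $x_2 u_1-x_1 u_2$, $x_2 u_2-x_1 u_3$ (from the Hilbert--Burch resolution of $\mathfrak{m}^2_{x_1,x_2}$) together with the mixed Koszul relations $x_i u_k - g_k t_i$ and $x_i t_j - x_j t_i$ for $i,j\geq 3$ and $k\in\{1,2,3\}$. On the standard chart $\{u_1\neq 0\}$ one uses the $\mathfrak{m}^2$-relations to eliminate $x_2$ and the mixed relations to eliminate each $x_j$ with $j\geq 3$, reducing the defining ideal to the single hypersurface equation $x_1(u_2^2-u_3)=0$ in affine space. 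Analogous eliminations on each of the other standard charts reduce the defining ideal to a regular sequence of the correct codimension; every chart is therefore a local complete intersection and so the fiber is Gorenstein.

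\textbf{Main obstacle.} The planar case for $n\geq 3$ is the delicate step. Since $\mathcal{I}_\eta$ has $n+1$ minimal generators, the Sym presentation carries strictly more linear relations than the codimension of the fiber in $\mathbb{A}^n\times\mathbb{P}^n$, and many of these relations are dependent modulo the others (for example $x_3(x_2 u_1-x_1 u_2)=x_2(x_3 u_1-x_1^2 t_3)-x_1(x_3 u_2-x_1 x_2 t_3)$ when $n=3$). Consequently the fiber is reducible---set-theoretically, the union of $\textrm{Bl}_\eta\mathbb{A}^n$ with an ``exceptional'' $\mathbb{P}^n$ over the origin---and the technical difficulty is to identify, chart by chart, an honest regular sequence among the many defining equations whose vanishing cuts out the fiber scheme-theoretically, so that the local complete intersection property (and hence Gorenstein-ness) persists.
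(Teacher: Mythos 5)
Your overall strategy---reduce to a punctual $\eta$ supported at the origin, identify the fiber with $\textbf{Proj}\,\textrm{Sym}^{\bullet}\mathcal{I}_{\eta}$, and check Gorenstein-ness chart by chart---is exactly the paper's, and your treatment of the curvilinear case and of the charts $u_1\neq0,u_2\neq0,u_3\neq0$ in the planar case is correct. The gap is in your final claim for the planar case: ``analogous eliminations on each of the other standard charts reduce the defining ideal to a regular sequence of the correct codimension'' fails on the charts $t_i\neq0$ with $i\geq 3$, which exist as soon as $n\geq3$. Setting $t_n=1$, the Koszul relations $x_it_n-x_nt_i$ eliminate $x_3,\dots,x_{n-1}$ and the mixed relations $x_nu_k-g_kt_n$ become $x_1^2-x_nu_1$, $x_1x_2-x_nu_2$, $x_2^2-x_nu_3$; together with the two Hilbert--Burch relations one is left with the five quadrics
$$x_2u_1-x_1u_2,\quad x_2u_2-x_1u_3,\quad x_1^2-x_nu_1,\quad x_1x_2-x_nu_2,\quad x_2^2-x_nu_3$$
in the six variables $x_1,x_2,x_n,u_1,u_2,u_3$, cutting out a $3$-dimensional scheme (times $\mathbb{A}^{n-3}$). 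No further elimination is possible, since every term is quadratic, and these five forms are linearly independent, hence minimal generators of the ideal. A codimension-$3$ ideal requiring five generators is not a complete intersection, so the l.c.i.\ argument breaks down precisely on the one genuinely hard chart; your ``main obstacle'' paragraph correctly identifies the difficulty but the proposed resolution (always finding a regular sequence) is not available.

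The fiber is still Gorenstein on that chart, but this needs a separate input. The paper verifies by Macaulay2 that the quotient of $\mathbb{C}[x_1,x_2,x_n,u_1,u_2,u_3]$ by these five quadrics---which are five of the six $2\times2$ minors of the symmetric matrix $\left(\begin{smallmatrix}u_1&u_2&x_1\\ u_2&u_3&x_2\\ x_1&x_2&x_n\end{smallmatrix}\right)$, omitting $u_1u_3-u_2^2$---is Gorenstein; alternatively one could exhibit the ideal as the $4\times4$ Pfaffians of a $5\times5$ skew-symmetric matrix and invoke the Buchsbaum--Eisenbud structure theorem for codimension-$3$ Gorenstein ideals. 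Either way, some such argument must replace your appeal to the complete intersection property. (For $n=2$ the charts $t_i$ do not occur and your argument is complete; the paper also handles the reduced and curvilinear cases more softly, via smoothness of $X^{[3,4]}$ at such points plus Hironaka's flatness lemma, but your direct computation there is a valid substitute.)
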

\begin{proof} 

 Note that there are three kinds of length 3 subscheme of $X$ :
 \begin{enumerate}
     \item [(a)] distinct three points
     \item [(b)] curvi-linear subschemes
     \item [(c)] non-curvi-linear subschemes
 \end{enumerate}

 Since $X^{[3]}$ is smooth at $[\eta]$ and the projection morphism $\mathcal{Z}_3 \lra X^{[3]}$ is flat, both inclusions $[\eta] \in X^{[3]}$ and $\eta \times [\eta] \hookrightarrow \mathcal{Z}_3$ are regular immersions. Thus, $\textrm{Tor}_1^R(R/I, R/J)$ vanishes where $R$ is the local ring $ \mathcal{O}_{X \times X^{[3]}, (x, [\eta])}$ and $I,J$ are ideals of $R$ defining $\mathcal{Z}_3$ and $X  \times [\eta]$, respectively. Hence, the restriction of the ideal $\mathcal{I}_{\mathcal{Z}_3}|_{X \times [\eta]}$ is isomorphic to the ideal $\mathcal{I}_{\eta/X}$. Then, the fiber $\pi_1^{-1}([\eta])$ is isomorphic to the projective spectrum $\textbf{Proj} \;\textrm{Sym}^{\bullet} \mathcal{I}_{\eta/X}$.

   Let's consider the case (a) and (b) first. In these cases, $\xi$ can be embedded into some affine plane. By the standard argument in deformation theory, $X^{[3,4]}$ is smooth at point $([\eta],[\xi])$. By the subsection \S\ref{section nummingens}, dimension of the fiber $\pi_1^{-1}([\eta])$ is $n$ in these cases. By Hironaka's lemma \cite[Theorem 23.1]{Matsumura.89}, $\pi_1$ is flat at this point and hence the fiber is Gorenstein. (Actually, it is l.c.i.)

 \begin{comment}
 For the curvi-linear cases, we may assume that our $\eta$ is supported on the origin of $\mathbb{A}^n$ with the following defining equations :
 \begin{equation} \label{equation_curvi}
  x_1^{\ell}, x_2, \cdots, x_n
  \end{equation} 
  for $\ell = 2, 3$ where $x_1, \cdots, x_n$ are coordinates of affine $n$-space $\mathbb{A}^n$.

 Let check for case (a). Denote $X^{[3]}_{\textrm{smooth}}$ by the locus of distinct $3$ points of $X$ for a while. Let $\rho : \mathcal{Z}_3 \lra X^{[3]}$ be the projection from the universal family to the Hilbert scheme of $3$ points. By \cite{Gottsche}, the inverse image $\rho^{-1}(X^{[3]}_{\textrm{smooth}}) \subset \mathcal{Z}_3$ is smooth. In this case, for each point $[\eta] \in X^{[3]}_{\textrm{smooth}}$, the fiber of $\pi_1$ is isomorphic to the blow-up $\textrm{Bl}_{\eta}X$, which is a smooth variety of dimension $n$.

\end{comment}

  We only need to examine the fiber of $\pi_1$ for the case (c). We may assume that $\eta$ is supported on the origin of $\mathbb{A}^n$ with the following defining equations : 
\begin{equation*} \label{eqn of non-curvi-linear scheme}
 x_1^2, x_1x_2, x_2^2, x_3, \cdots, x_n.
 \end{equation*}   Assume $n \geq 3$. The fiber $\pi_1^{-1}([\eta])$ is a closed subvariety of $\mathbb{A}^n_{x_1,\cdots, x_n} \times \mathbb{P}^n_{u_0,\cdots, u_n}$ defined by following  equations for $i,j \geq 3$ 
 \begin{equation} \label{equation}
     x_1u_1-x_2u_0, x_1u_2-x_2u_1, x_1^2u_i-x_iu_0, x_1x_2u_i-x_iu_1, x_2^2u_i-x_iu_2, x_iu_j - x_ju_i.
 \end{equation}
Let us focus on the locus where $u_n \neq 0$. On this locus, the equations reduced to:  
\begin{equation*} \label{eqn for blow up of non-lci}
    x_1u_1-x_2, x_1u_2-x_2u_1, x_1^2-x_nu_0, x_1x_2-x_nu_1, x_2^2-x_nu_2, x_1^2u_i-x_i \;(\textnormal{for} \; i \geq 3).
\end{equation*}
 Thus, the locus where $u_n \neq 0$ is isomorphic to the product of $\mathbb{A}^{n-3}$ with the spectrum of $$ A := \mathbb{C}[x_1,x_2,x_n, u_0, u_1,u_2]/(x_1u_1-x_2u_0, x_1u_2-x_2u_1, x_1^2-x_nu_0, x_1x_2-x_nu_1, x_2^2-x_nu_2).$$
  The ring $A$ is Gorenstein by Macaulay2. (We can check this when the base field is the field of rational numbers. Since the base field extension by $\mathbb{C}$ is faithfully flat, it does not harm the Gorenstein property.)

 By symmetry, the same arguments holds for the locus $u_3 \neq 0, \cdots, u_{n-1} \neq 0$. Now consider the cases where $u_0 \neq 0, u_1 \neq 0$ and $u_2 \neq 0$. We only need to analyze the cases two cases $u_0 \neq 0$ and $u_1 \neq 0$ due to symmetry.

 On the locus $u_0 \neq 0$, we simplify the equations (\ref{equation}) to:
 \begin{equation*} \label{u_0 nonzero}
     x_1u_1-x_2, x_1u_2-x_2u_1, x_1^2u_i-x_i \;(\textnormal{for} \; i \geq 3).
 \end{equation*}
The locus satisfying $u_0 \neq 0$ is isomorphic to product of $\mathbb{A}^{n-2}$ with the spectrum of the ring 
$$ A := \mathbb{C}[x_1,x_2,u_1,u_2]/(x_1u_1-x_2, x_1u_2-x_2u_1).$$
 This ring is Gorenstein and hence the fiber $\pi_1^{-1}([\eta])$ is Gorenstein. (If $n=2$, $\Spec A$ is an open subscheme of the fiber $\pi_1^{-1}([\eta])$.)

 Similarly, on the locus $u_1 \neq 0$, we simplify equations (\ref{equation}) to:
 \begin{equation*} \label{u_1 nonzero}
     x_1-x_2u_0, x_1u_2-x_2, x_1x_2u_i-x_i
 \end{equation*}
The locus satisfying $u_1 \neq 0$ is isomorphic to product of $\mathbb{A}^{n-2}$ with 
$$ \Spec \; \mathbb{C}[x_1,x_2,u_0,u_2]/(x_1-x_2u_0, x_1u_2-x_2).$$
 As in the above argument, the locus $u_1 \neq 0$ of the fiber of $\pi_1$ is also Gorenstein.

Thus, we conclude that the fiber of $\pi_1$ is always a Gorenstein scheme.
\end{proof}

\begin{rem} \label{remark on case (b)}
 We can check case (b) by direct computation. We may assume that $\eta$ is supported on the origin of $\mathbb{A}^n$ with the following defining equations : 
\begin{equation*} \label{eqn of non-curvi-linear scheme}
 x_1^{\ell},x_2, x_3, \cdots, x_n,
 \end{equation*} where $\ell $ is either $2$ or $3$. 
The fiber $\pi_1^{-1}([\eta]) \subset \mathbb{A}^n_{x_1,\cdots, x_n} \times \mathbb{P}^{n-1}_{u_1 , \cdots, u_n}$ is a closed subvariety defined by following equations for $i,j \geq 2$
 \begin{equation} \label{equation_curvi_fiber}
     x_1^{\ell}u_j - x_ju_1, x_iu_j-x_ju_i.
 \end{equation}

   We examine these equations on the locus $u_1 \neq 0$. In this case, the equations (\ref{equation_curvi_fiber}) reduced to:
 \begin{equation*}
     x_1^{\ell}u_j-x_j, x_iu_j-x_ju_i.
 \end{equation*}
 This locus is nonsingular, as verified by computing the rank of the Jacobian matrix of the equations.

  Next, we consider the locus $u_j \neq 0$ for $j \geq 2$. Without loss of generality, we may set $j = 2$. Then, the locus $u_2 \neq 0$ is defined by the equations for $i, k \geq 3$  
  \begin{equation*}
      x_1^{\ell}-x_2u_1, x_1^{\ell}u_i-x_iu_1, x_i-x_2u_i, x_iu_k-x_ku_i.
  \end{equation*}

The locus where $u_2 \neq 0$ is isomorphic to the product of $\mathbb{A}^{n-2}$ with the spectrum 
$$ \Spec \; \mathbb{C}[x_1, x_2, u_1]/(x_1^{\ell} -x_2u_1).$$
This locus is Gorenstein and hence, $\pi_1^{-1}([\eta])$ is Gorenstein for the case (b).

\end{rem}

 Let $\eta$ be a length-$3$ subscheme of $X$ that is \'{e}tale locally defined by the equations given in  (\ref{eqn of non-curvi-linear scheme}). We observe that the fiber $\pi_1^{-1}([\eta])$ indeed has two components of the same dimension. In the component containing the locus where $x_n \neq 0$, the following expression hold:
 $$ u_0 = \frac{x_1^2}{x_n}, u_1 = \frac{x_1x_2 }{x_n}, u_2 = \frac{x_2^2}{x_n}, $$
and on this locus, the relation $u_0u_2-u_1^2$ is satisfied. This component is thus isomorphic to an open subvariety of blow up of $X$. We refer to it as the blow-up component and denote it by $B_n$. The blow-up component parameterizes those pairs $([\eta], [\xi])$ for which $\xi$ is a flat limit of length-$4$ scheme that splits as the disjoint union of $\eta$ and a single reduced point. The component supported on the locus $x_n = 0$ is isomorphic to $\mathbb{P}^n$. We refer to it as the projective space component and denote it by $P_n$. 
\begin{comment}
  
  When $X$ is a smooth projective surface, both $X^{[3,4]}$ and $X^{[3]}$ are smooth. Moreover, every fiber of the projection $\pi_1 : X^{[3,4]} \longrightarrow X^{[3]}$ has dimension 2. By Hironaka's lemma(cf. \cite[Theorem 23.1]{Matsumura.89}), it follows that $\pi_1$ is flat in this situation. One can also compute these fibers explicitly and observe that the same behavior seen in the surface case persists in higher dimensions. From this observation, we can deduce the flatness of the morphism $\pi_1$ for higher dimensional cases.  
\end{comment}

\begin{proof}[Proof 1 of proposition \ref{main flat}] 
 To prove the flatness of $\pi_1 : X^{[3,4]} \lra X^{[3]}$, it is enough to consider when $X = \mathbb{P}^n$.  
 We have shown that $\pi_1$ is flat over the locus of $X^{[3]}$ corresponding to cases (a) and (b) in proposition \ref{fiber lci}. We now extend flatness by showing constancy of the Hilbert polynomial of the fiber of $\pi_1$. Let $\eta$ be a length 3 subscheme of case (c). Choose a suitable coordinates and embedding  $\mathbb{P}^2_{x_0, x_1, x_2} \hookrightarrow \mathbb{P}^n_{x_0, x_1, \cdots, x_n}$ such that $\eta \subset \mathbb{P}^2_{x_0,x_1,x_2}$. We temporally denote the first projection $X^{[3,4]} \lra X^{[3]}$  by $g_2$ and $g_n$ for $X = \mathbb{P}^2$ and $X = \mathbb{P}^n$, respectively. This induces the  commutative diagram:
\begin{equation*} 
\xymatrix@=30pt{
   (\mathbb{P}^2)^{[3,4]}  \ar[r]^-{g_2} \ar[d] & (\mathbb{P}^2)^{[3]} \ar[d] \\
  (\mathbb{P}^n)^{[3,4]}  \ar[r]^-{g_n} & (\mathbb{P}^n)^{[3]}, }
\end{equation*}

 where the left and right vertical arrow are closed immersions. Because $[\eta]$ lies in the closure of the smooth locus of $(\mathbb{P}^2)^{[3]}$, we can choose such a curve $C \subset (\mathbb{P}^2)^{[3]}$ passing through $[\eta]$ such that $C - [\eta]$ lies in the open locus consisting of distinct $3$ points. The restriction $g_2^{-1}(C) \lra C$ is flat. In particular,  
  $$\overline{g_2^{-1}(C - [\eta])} = g_2^{-1}(C).$$
  We also have that $$g_n^{-1}(C-[\eta]) \lra C - [\eta]$$ is flat. Let
  $$g' : \overline{g_n^{-1}(C - [\eta])} \to C$$
 be the closure of that family over $C$ which is indeed flat over the whole curve $C$ by \cite[III, Proposition 9.8]{Hartshorne.77}.

  It remains to check the case (c) at the special fiber over $[\eta]$. Recall that $g_2^{-1}([\eta]) = B_2 \cup P_2$ and  $g_n^{-1}([\eta]) = B_n \cup P_n$ hold. By the computation in the proof of proposition \ref{fiber lci} we have the inclusions $B_2 \subset B_n$ and $P_2 \subset P_n$, while $B_2 \not\subset P_n$ and $P_2 \not\subset B_n$.

  By construction, $g'$ is flat, so its fiber is pure of dimension $n$. Since $(g')^{-1}([\eta])$  contains the plane-fiber $g_2^{-1}([\eta])$, it must contain the entire $g_n^{-1}([\eta])$. Therefore the closure $\overline{g_n^{-1}(C - [\eta])}$ agrees with $g_n^{-1}(C)$, and hence the restriction $g_n|_{g_n^{-1}(C)} : g_n^{-1}(C) \lra C$ is flat. This shows that the Hilbert polynomial of the fibers $\pi_1^{-1}([\eta])$ is constant along $[\eta]$ showing the flatness of $\pi_1$.
\end{proof}

Therefore, $\pi_1$ is a flat Gorenstein morphism, which implies that $\mathbb{P}(\mathcal{I}_{\mathcal{Z}_3})$ is Gorenstein. In particular, the nested Hilbert scheme of points $X^{[3,4]}$ is normal and isomorphic to the blow-up $\textrm{Bl}_{\mathcal{Z}_3}(X \times X^{[3]})$.

\section{Alternate proof of flatness of $\pi_1$}
We present a canonical projective embedding of $X^{[k,k+1]}$ and compute the Hilbert polynomial of the fiber $\pi_1^{-1}([\eta])$ when $k=3$, assuming proposition \ref{fiber lci}. This computation directly shows proposition \ref{main flat}. To construct a natural ample line bundle on $X^{[k,k+1]}$, we begin with the following definition.

 \begin{defn} [cf. {\cite{BS88} and \cite{catanese90}}]
    A line bundle $\mathcal{L}$ on a complete algebraic variety $X$ over an algebraically closed field $k$ is \textit{d-very ample} if, for every zero-dimensional subscheme $Z$ of $Y$ with length less than or equal to $d+1$, the restriction map $$r_Z: H^0(Y, \mathcal{L}) \to H^0(Y, \mathcal{L} \otimes \mathcal{O}_Z)$$ is surjective.
\end{defn}

 For example, $0$-very ampleness is equivalent to global generation and $1$-very ampleness is equivalent to the original very ampleness. Tensor product of $d_1$-very ample line bundle and $d_2$-very ample line bundle is $d_1+d_2$-very ample, allowing us to construct higher very ample bundles.

 On the other hand, consider two projections $p_1$ and $p_2$ from $X \times X^{[k]}$ to each factors. Let $L$ be a line bundle on $X$. Since $p_2$ is finite flat of degree $k$, the sheaf $p_{2,*}(p_1^*L \otimes \mathcal{O}_{\mathcal{Z}_k})$ is locally free of rank $k$. This sheaf, known as the tautological bundle, is denote by $E_{k,L}$. If $L$ is $k$-very ample, there is a natural morphism 
 $$H^0(X, L) \otimes \mathcal{O}_{X^{[k]}} \longrightarrow E_{k,L}$$
 that is surjective. According  to \cite{catanese90}, the determinant bundle $N_{k,L}:= \textrm{det} E_{k,L}$ is very ample on $X^{[k]}$. The linear system $|N_{k, L}|$ defines an embedding $X^{[k]} \hookrightarrow \mathbb{P}^m$ via the Pl\"{u}cker embedding.

  Now assume that $L$ is $k+1$-very ample. The linear system $|\pi_2^*N_{k+1, L}|$ defines a morphism $$X^{[k,k+1]} \subset X^{[k]} \times X^{[k+1]} \longrightarrow X^{[k]} \times \mathbb{P}^{\ell},$$ for some $\ell$ 
  implying that $\pi_2^*N_{k+1, L}$ is relatively ample over $X^{[k]}$.

    Let $\mathcal{I}_{k,k+1}$ denote the kernel of the morphism $\pi_{2,X}^*\mathcal{O}_{\mathcal{Z}_{k+1}} \lra \pi_{1,X}^*\mathcal{O}_{\mathcal{Z}_{k}}$. By \cite[Proposition 2.2]{Kleiman.90}, the sheaf $\mathcal{I}_{k,k+1}$ is flat over $X^{[k,k+1]}$, and it coincides with the tautological line bundle $\mathcal{O}_{\mathbb{P}(\mathcal{I}_{\mathcal{Z}_k})}(1)$ on the projective scheme $\mathbb{P}(\mathcal{I}_{\mathcal{Z}_{k}})$. By tensoring with $\operatorname{pr}_1^*L$ and then taking the direct image under $\operatorname{pr}_2$ yields the short exact sequence
    $$ 0 \lra \textrm{res}^*L \otimes \mathcal{I}_{k,k+1} \lra \pi_2^*E_{k+1,L} \lra \pi_1^*E_{k,L} \lra 0,$$
  showing that $\pi_2^*N_{k+1,L} = \pi_1^*N_{k,L} \otimes \textrm{res}^*L \otimes \mathcal{I}_{k,k+1},$ so that 
 $$\textrm{res}^*L \otimes \mathcal{I}_{k,k+1}$$
 is relatively ample over $X^{[k]}$. This justifies the computation of the Hilbert polynomial of the family. 

\begin{proof}[Proof 2 of proposition \ref{main flat}]
 We already showed in the proposition \ref{fiber lci} that $\pi_1$ is flat at the point $([\eta],[\xi])$ where $[\eta]$ is either distinct points or curvilinear subscheme. We show the flatness of $\pi_1$ by comparing the Hilbert polynomial of the fibers $\pi_1^{-1}([\eta])$ at distinct points and at non-curvilinear schemes. Consider the case when $\eta$ consists of three distinct points of $X$. The fiber $\pi_1^{-1}([\eta])$ is isomorphic to the blow-up of $X$ along those three points. Let $\mu : X' \longrightarrow X$ denote the blow-up, with corresponding exceptional divisors $E_1, E_2$ and $E_3$. The Hilbert polynomial of $\pi_1^{-1}([\eta])$ is given by: 
 \begin{equation*}
     h^0(X', \mu^*L^{\ell} \otimes \mathcal{O}(-\ell E_1-\ell E_2 - \ell E_3)).
 \end{equation*}

This formula is equivalent to the following:
\begin{equation*}
    h^0(X, L^{\ell}) - 3\sum_{i=0}^{\ell - 1}h^0(E_1, \mathcal{O}_{E_1}(iE_1))
\end{equation*}
 since $\mu$ is a birational morphism onto a normal variety $X$. Note that $h^0(E_1, \mathcal{O}_{E_1}(iE_1))$ is the same as $h^0(\mathbb{P}^{n-1}, \mathcal{O}(i))$. Therefore, the Hilbert polynomial of the fiber $\pi_1^{-1}([\eta])$ is given by 
 \begin{equation} \label{hilb poly distinct pts}
  h^0(X, L^{\ell}) - 3\sum_{i=0}^{\ell - 1}h^0(\mathbb{P}^{n-1}, \mathcal{O}(i)). 
 \end{equation}

  Next, we consider the fiber at a non-curvilinear scheme. %%Let $\mathcal{M}$ denote the restriction $(\textrm{res}_{k,k+1}^*\mathcal{L}  \otimes \textrm{pr}_{2,*} \mathcal{I}_{k,k+1})|_{\pi_1^{-1}([\eta])}$. 
Recall that we denote $B_n$ and $P_n$ by the blow-up component and the exceptional projective space component of the fiber $\pi_1^{-1}([\eta])$. Denote by $E_{\eta}$ the exceptional divisor of the blow-up $\mu : B_n \longrightarrow X$ along $\eta$.  Then, $E_{\eta} \subset P_n$ can be regarded as a (possibly singular) quadric $Q$. The Hilbert polynomial of the fiber $\pi_1^{-1}([\eta])$ is provided as:
 \begin{equation} \label{hilb poly noncurcilinear}
     h^0(B_n, \mu^*L^{\ell} \otimes \mathcal{O}(-\ell E_{\eta})) + h^0(\mathbb{P}^n, \mathcal{O}(\ell)) - h^0(Q, \mathcal{O}_Q(\ell)).     
 \end{equation}

  By Macaulay2, we compute the depth of blow-up component $B_n$ and check that it is normal. Thus the first term can be rewritten as:
 $$ h^0(X, L^{\ell}) - \sum_{i=0}^{\ell - 1}h^0(E_{\eta}, \mathcal{O}_{E_{\eta}}(iE_{\eta})). $$

 Let $I = (x_3, \cdots, x_n) $ and $m = (x_1, x_2)$ be ideals of polynomial ring $k[x_1, \cdots, x_n]$. Let $J = I + m^2$. The dimension $h^0(E_{\eta}, \mathcal{O}_{E_{\eta}}(iE_{\eta}))$ equals the length of module $J^i/J^{i+1}$. Since $J^i = I^i + m^2I^{i-1} + \cdots + m^{2i-2}I + m^{2i}$, we need only to count the minimal generators of $J^i$. This number is given by:
 \begin{align*}
 &3\left( {n+i-3 \choose i} + 3 {n+i-4 \choose i-1} + \cdots + (2i-3){n-1 \choose 2} + (2i-1){n-2 \choose 1} + (2i+1) \right)\\
 - &\left( 2 {n+i-4 \choose i-1} + \cdots + (2i-4){n-1 \choose 2} + (2i-2){n-2 \choose 1} + 2i \right).   
 \end{align*}

 This simplifies to:
 $$\sum_{i=0}^{\ell - 1}h^0(E_{\eta}, \mathcal{O}_{E_{\eta}}(iE_{\eta}))=  3{n+\ell - 1 \choose n} + {n + \ell-2 \choose n}. $$

On the other hand, $h^0(Q, \mathcal{O}_Q(\ell))$ can be computed using the short exact sequence:
$$ 0 \longrightarrow \mathcal{O}_{\mathbb{P}^n}(\ell - 2) \longrightarrow \mathcal{O}_{\mathbb{P}^n}(\ell) \longrightarrow \mathcal{O}_Q(\ell) \longrightarrow 0. $$

 For $\ell \gg 2,$ we have: $$h^0(\mathbb{P}^n, \mathcal{O}(\ell)) - h^0(Q, \mathcal{O}_Q(\ell)) = {n+\ell-2 \choose \ell - 2}. $$

  Combining these results, we see that expressions (\ref{hilb poly distinct pts}) and (\ref{hilb poly noncurcilinear}) coincide when $\ell$ is large. Thus, $\pi_1 : X^{[3,4]} \longrightarrow X^{[3]}$ is flat. 
 \end{proof}

\section{Showing canonical singularities via blow-ups} \label{section cano sing via blow up}
 We show that $X^{[3,4]} \cong \textrm{Bl}_{\mathcal{Z}_3}(X \times X^{[3]})$ has rational singularities. By comparing induced universal families one checks
$$X^{[1,3,4]} \cong X^{[1,3]} \times_{X^{[3]}} X^{[3,4]} \quad \textrm{and} \quad X^{[2,3,4]} \cong X^{[2,3]} \times_{X^{[3]}} X^{[3,4]}.$$ Since the projection $\mathcal{Z}_3 \lra X^{[3]}$ is faithfully flat, the morphism $X^{[1,3,4]} \lra X^{[3,4]}$ is also faithfully flat. If we show that $X^{[1,3,4]}$ has rational singularities, then $X^{[3,4]}$ does as well by \cite{Boutot.87}. Due to some technical issues, we resolve the singularities by $X^{[2,3,4]}$ by applying the morphism $(\textrm{res}_{2,3}, \pi_2) : X^{[2,3]} \lra \mathcal{Z}_3$. As in \cite[section 2.2]{Ryan.Taylor.22}, the following holds: 
$$X^{[2,3,4]} \cong \textrm{Bl}_{\mathcal{W}} (X \times X^{[2,3]}).$$

Note that the schemes  $\mathcal{W}$, $\mathcal{V}$ and $\Gamma$ are reduced since they are finite and flat over $X^{[2,3]}$. Therefore $\mathcal{W} = \mathcal{V} \cup \Gamma$.   
 Since $\mathcal{W}$ is reducible, it is inconvenient to study its blow-up directly. Instead, we consider the two-step blow-up $\textrm{Bl}_{\widetilde{\Gamma}}\textrm{Bl}_{\mathcal{V}}(X\times X^{[2,3]})$ where $\widetilde{\Gamma}$ denotes the strict transform of $\Gamma$ under the first blow-up $\textrm{Bl}_{\mathcal{V}}(X \times X^{[2,3]}) \lra X \times X^{[2,3]}$.

  \begin{prop} \label{rational singularities of blow-up}
      The two step blow-up $\textrm{Bl}_{\widetilde{\Gamma}}\textrm{Bl}_{\mathcal{V}}(X \times X^{[2,3]})$ has rational singularities.
  \end{prop}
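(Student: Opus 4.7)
The plan is to verify that both blow-ups are taken along regularly embedded subschemes with controlled singularities, and to deduce rational singularities for $B := \textrm{Bl}_{\widetilde{\Gamma}}\textrm{Bl}_{\mathcal{V}}(X \times X^{[2,3]})$ from the standard transfer principle: if $Y$ has rational singularities and $Z \subset Y$ is regularly embedded with rational singularities, then $\textrm{Bl}_Z Y$ again has rational singularities. The ambient $X \times X^{[2,3]}$ is smooth, since $X^{[2,3]}$ is smooth by Cheah's classification, and both $\mathcal{V}$ and $\Gamma$ are regularly embedded of codimension $n$: $\Gamma$ is the smooth graph of $\textrm{res}_{2,3}$, and $\mathcal{V}$ is the relative universal length-$2$ family, which \'{e}tale-locally is cut out by $n$ equations defining a length-$2$ subscheme of $\mathbb{A}^n$.

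For the first blow-up, identify $\mathcal{V}$ with $X^{[1,2,3]}$, realized as the base change of the finite flat cover $\mathcal{Z}_2 \to X^{[2]}$ along $X^{[2,3]} \to X^{[2]}$. The plan is to show that $\mathcal{V}$ has rational (in fact canonical Gorenstein) singularities via the \'{e}tale-local reduction of Section~\ref{section 2}: after passing to affine models on $\mathbb{A}^n$, the defining ideal of $\mathcal{V}$ becomes explicit and a case split on $\xi$ (distinct, curvi-linear, non-curvi-linear) with a Macaulay2 verification parallel to the proof of Proposition~\ref{fiber lci} handles each case. Since $\mathcal{V}$ is regularly embedded in the smooth ambient, the exceptional divisor of the first blow-up is the projective bundle $\mathbb{P}(N^{\vee}_{\mathcal{V}/X \times X^{[2,3]}})$ over $\mathcal{V}$, and $Y_1 := \textrm{Bl}_{\mathcal{V}}(X \times X^{[2,3]})$ inherits rational singularities from $\mathcal{V}$ via the transfer principle.

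For the second blow-up, $\widetilde{\Gamma}$ is the blow-up of the smooth subvariety $\Gamma \cong X^{[2,3]}$ along the scheme-theoretic intersection $\Gamma \cap \mathcal{V}$. Set-theoretically, $\Gamma \cap \mathcal{V}$ consists of triples $(x, [\eta], [\xi])$ in which the residual point $x$ of $\xi$ with respect to $\eta$ actually lies in $\eta$, which forces $\xi$ to be a non-curvi-linear subscheme concentrated at a single point of $\eta$. The next step is to describe this intersection explicitly in local coordinates, confirm it is regularly embedded in the smooth $\Gamma$, and then deduce that $\widetilde{\Gamma}$ is regularly embedded in $Y_1$ with rational (Gorenstein) singularities via a local computation. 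A second application of the transfer principle then yields that $B$ has rational singularities.

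The main obstacle is this last step: analyzing $\widetilde{\Gamma}$ in $Y_1$ over the non-curvi-linear locus of $X^{[3]}$. Away from that locus, $\Gamma$ and $\mathcal{V}$ are disjoint or meet transversely, so $\widetilde{\Gamma}$ is locally the blow-up of a smooth variety along a smooth center, hence smooth. Over the non-curvi-linear locus one has to track the defining equations of $\Gamma$ through the blow-up along $\mathcal{V}$ and verify that the resulting strict transform is lci with rational singularities; this is precisely where the explicit affine ring computations in the style of Proposition~\ref{fiber lci}, presumably again via Macaulay2, become essential.
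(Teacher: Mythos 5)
There is a genuine gap, and it sits exactly where you place all the weight. The ``standard transfer principle'' you invoke --- that $\textrm{Bl}_Z Y$ has rational singularities whenever $Y$ has rational singularities and $Z\subset Y$ is regularly embedded with rational singularities --- is not a standard theorem, and you neither prove it nor cite a source; since both blow-ups in the proposition are handled by appealing to it, the argument has no engine. Moreover, your geometric picture of the centers is incorrect. The intersection $\Gamma\cap\mathcal{V}$ is \emph{not} confined to non-curvi-linear $\xi$ concentrated at one point: it is the locus where the residual point of $\xi$ with respect to $\eta$ lies on $\eta$, which already occurs when $\eta=\{p,q\}$ is reduced and $\xi$ doubles $p$ in some tangent direction (a curvi-linear $\xi$). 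In fact $\Gamma\cap\mathcal{V}\cong F_2$, the exceptional divisor of $X^{[2,3]}\to X\times X^{[2]}$, so it is a \emph{divisor} in $\Gamma\cong X^{[2,3]}$; the two codimension-$n$ centers $\Gamma$ and $\mathcal{V}$ meet in codimension $n+1$ in the ambient space and hence never meet transversely for $n\ge 2$. This undercuts your claim that the second blow-up is only problematic over the non-curvi-linear locus, and the step you yourself flag as ``the main obstacle'' --- identifying $\widetilde{\Gamma}$ inside $\textrm{Bl}_{\mathcal{V}}(X\times X^{[2,3]})$ and controlling the second blow-up there --- is left entirely open.

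For contrast, the paper's route avoids both problems. Flatness of $X^{[2,3]}\to X^{[2]}$ gives the identification $\textrm{Bl}_{\mathcal{V}}(X\times X^{[2,3]})\cong X^{[2,3]}\times_{X^{[2]}}X^{[2,3]}$, and since $\Gamma\cap\mathcal{V}\cong F_2$ is Cartier in $\Gamma$, the strict transform $\widetilde{\Gamma}$ is just the diagonal copy of $X^{[2,3]}$ in this fiber product. One then projects the two-step blow-up to $X^{[2,3]}$ and computes the fibers: each is $\textrm{Bl}_{\zeta}\pi_1^{-1}([\eta])$ for $\eta$ of length $2$, which is either smooth (distinct points) or the blow-up at the vertex of the affine quadric cone $x_1^2=x_2x_3$, i.e.\ the total space of $\mathcal{O}(-1)$ over a quadric, with canonical Gorenstein singularities. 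All fibers are normal of dimension $n$, so the projection is flat, and Elkik's theorem \cite{Elkik.78} on deformations of rational singularities yields the conclusion --- no general blow-up transfer principle is needed. If you want to salvage your approach, you would have to isolate and prove the transfer statement you are using (at least in the specific local models arising here, via the explicit charts as in Proposition \ref{fiber lci}), and redo the analysis of $\widetilde{\Gamma}$ with the correct identification $\Gamma\cap\mathcal{V}\cong F_2$.
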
 
\begin{proof} 
We keep the notation of the preceding discussion. First note that, because $X^{[2,3]} \lra X^{[2]}$ is flat, the blow-up can be identified with the self-fiber product $$\textrm{Bl}_{\mathcal{V}}(X \times X^{[2,3]}) \cong X^{[2,3]} \times_{X^{[2]}} X^{[2,3]}.$$ Let $F_2$ denote the exceptional divisor of the blow-up $X^{[2,3]} \lra X \times X^{[2]}$. One checks that $\Gamma \cap \mathcal{V} \cong F_2$; hence the strict transform $\widetilde{\Gamma}$ of $\Gamma$ is isomorphic to the blow-up $\textrm{Bl}_{F_2} X^{[2,3]}$. Since $F_2$ is a Cartier divisor of $X^{[2,3]}$, the blow-up is isomorphic to $\Gamma$. Under the identification 
$$\textrm{Bl}_{\mathcal{V}} (X \times X^{[2,3]}) \cong X^{[2,3]}\times_{X^{[2]}}X^{[2,3]},$$ 
the subscheme $\widetilde{\Gamma}$ is the diagonal copy of $X^{[2,3]}$. Consider the projection $$p : \textrm{Bl}_{\widetilde{\Gamma}}\textrm{Bl}_{\mathcal{V}}(X \times X^{[2,3]}) \lra X^{[2,3]},$$ given by composing the blow-ups with the second projection $X^{[2,3]}\times_{X^{[2]}}X^{[2,3]}$. As the Tor group argument in the proof of proposition \ref{fiber lci}, for a closed point $\zeta = ([\eta], [\xi])$ the fiber $p^{-1}(\zeta)$ is isomorphic to the blow-up of the fiber $\pi_1^{-1}([\eta])$ at the point corresponding to $\zeta$; in symbols
$$p^{-1}(\zeta) \cong \textrm{Bl}_{\zeta}\pi_1^{-1}([\eta]).$$ 
\begin{itemize}
    \item  If $\eta$ consists of two distinct points, then $\textrm{Bl}_{\zeta}\pi_1^{-1}([\eta])$ is smooth, so its blow-up at any point is smooth; hence the corresponding fiber $p^{-1}(\zeta)$ is smooth. 
    \item Recall remark \ref{remark on case (b)}. If $\eta$ is a length 2 fat point(the non-reduced case), then (locally near the singular locus) $\pi_1^{-1}([\eta])$ is analytically  isomorphic to the affine quadric hypersurface $$V(f) \subset \mathbb{A}^n, \quad f = x_1^2-x_2x_3,$$ and the singular point corresponds to the origin $o \in \mathbb{A}^n$. The blow up $\textrm{Bl}_{o}V(f)$ is isomorphic to the total space of $\mathcal{O}(-1)$ over the projective quadric $Q_{n-1}$; in particular $\textrm{Bl}_{o}V(f)$ has at worst canonical Gorenstein singularities (because $Q_{n-1}$ has canonical Gorenstein singularities).
\end{itemize}
Hence every fiber $p^{-1}(\zeta)$ is normal with dimension $n$ and hence $p$ is a flat morphism by \cite[III, Theorem 9.11]{Hartshorne.77}. Since the fiber  $p^{-1}(\zeta)$ has rational singularities, by \cite{Elkik.78}, $\textrm{Bl}_{\widetilde{\Gamma}}\textrm{Bl}_{\mathcal{V}}(X \times X^{[2,3]})$ also has rational singularities.  

\end{proof}

The following lemma helps to compare two-step blow-up with single-step blow-up that we desire to investigate.

 \begin{lem} \label{comparison between blow-ups}
 Let the notation be as above. Then there exists a morphism $$\textrm{Bl}_{\widetilde{\Gamma}}\textrm{Bl}_{\mathcal{V}}(X \times X^{[2,3]}) \lra \textrm{Bl}_{\mathcal{W}}(X\times X^{[2,3]}),$$ induced by the universal property of blowing up. Moreover, it is a small birational morphism(no divisor is contracted). 
 \end{lem}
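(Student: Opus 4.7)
The plan is to invoke the universal property of blowing up. Writing $Y_2 := \textrm{Bl}_{\widetilde{\Gamma}}\textrm{Bl}_{\mathcal{V}}(X \times X^{[2,3]})$ and $f : Y_2 \lra X \times X^{[2,3]}$ for the composition of the two blow-down maps, a lift $\widetilde{f} : Y_2 \lra \textrm{Bl}_{\mathcal{W}}(X \times X^{[2,3]})$ exists if and only if $f^{-1}\mathcal{I}_{\mathcal{W}} \cdot \mathcal{O}_{Y_2}$ is an invertible ideal sheaf on $Y_2$. Since $\mathcal{W}$ is reduced and $\mathcal{W} = \mathcal{V} \cup \Gamma$ set-theoretically, we have $\mathcal{I}_{\mathcal{W}} = \mathcal{I}_{\mathcal{V}} \cap \mathcal{I}_{\Gamma}$, so invertibility is a local question on $X \times X^{[2,3]}$.

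The verification divides according to whether the chosen point lies in $\mathcal{V} \setminus \Gamma$, $\Gamma \setminus \mathcal{V}$, or $F_2 = \mathcal{V} \cap \Gamma$. In the first two cases, $\mathcal{I}_{\mathcal{V}}$ and $\mathcal{I}_{\Gamma}$ are locally coprime: near $\mathcal{V} \setminus \Gamma$ the intersection reduces to $\mathcal{I}_{\mathcal{V}}$ (made invertible by the first blow-up), while near $\Gamma \setminus \mathcal{V}$ the first blow-up is an isomorphism and $\widetilde{\Gamma}$ agrees with $\Gamma$, so the second blow-up makes $\mathcal{I}_{\Gamma}$ invertible. For points of $F_2$, I would use the identification $\textrm{Bl}_{\mathcal{V}}(X \times X^{[2,3]}) \cong X^{[2,3]} \times_{X^{[2]}} X^{[2,3]}$ established in the proof of Proposition \ref{rational singularities of blow-up}, under which $\widetilde{\Gamma}$ is the diagonal copy of $X^{[2,3]}$. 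A local computation on the fiber product should show that $\mathcal{I}_{\Gamma} \cdot \mathcal{O}_{Y_1}$ factors as $\mathcal{I}_{\widetilde{\Gamma}}$ times a (possibly trivial) power of the ideal defining the first exceptional divisor $E_1$; blowing up $\widetilde{\Gamma}$ then renders the result invertible, and consequently $\mathcal{I}_{\mathcal{W}} \cdot \mathcal{O}_{Y_2}$ is invertible, of the form $\mathcal{O}_{Y_2}(-E_1 - E_2)$ up to appropriate multiplicities, where $E_2$ is the exceptional divisor of the second blow-up.

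Given $\widetilde{f}$, birationality is immediate since $\widetilde{f}$ restricts to an isomorphism on the open complement of the exceptional loci. For smallness, the only divisors of $Y_2$ that could be contracted are $E_1$ and $E_2$. Over the generic point of $\mathcal{V}$, which lies outside $F_2$, the subscheme $\mathcal{W}$ locally coincides with $\mathcal{V}$, so $\textrm{Bl}_{\mathcal{W}}$ restricts to $\textrm{Bl}_{\mathcal{V}}$ and $E_1$ maps isomorphically onto the corresponding open locus of the exceptional divisor of $\textrm{Bl}_{\mathcal{W}}$; the symmetric argument applies to $E_2$ and $\Gamma$. Thus $\widetilde{f}$ contracts no divisor, proving smallness. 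The main obstacle is the local verification at points of $F_2$, where one must carefully control how $\mathcal{I}_{\Gamma}$ and $\mathcal{I}_{\mathcal{V}}$ interact after the first blow-up, in particular to rule out embedded components that would prevent $\mathcal{I}_{\mathcal{W}} \cdot \mathcal{O}_{Y_2}$ from being locally principal.
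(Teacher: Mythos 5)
Your overall strategy --- reduce to showing that $f^{-1}\mathcal{I}_{\mathcal{W}}\cdot\mathcal{O}_{Y_2}$ is invertible and then invoke the universal property --- is the right starting point, and your treatment of the loci away from $F_2=\mathcal{V}\cap\Gamma$ and of smallness is fine. But the heart of the lemma is exactly the case you defer: at points of $F_2$ you write that ``a local computation \emph{should} show'' the required factorization, and you yourself flag this as the main obstacle. As written this is not a proof. Worse, the reduction you set up does not close even if the factorization claims for $\mathcal{I}_{\mathcal{V}}$ and $\mathcal{I}_{\Gamma}$ separately are granted: you need invertibility of the pullback of $\mathcal{I}_{\mathcal{W}}=\mathcal{I}_{\mathcal{V}}\cap\mathcal{I}_{\Gamma}$, and the inverse-image ideal of an \emph{intersection} is neither the intersection nor the product of the inverse-image ideals. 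Knowing that $\mathcal{I}_{\mathcal{V}}\cdot\mathcal{O}_{Y_2}$ and $\mathcal{I}_{\Gamma}\cdot\mathcal{O}_{Y_2}$ are invertible tells you that the product ideal $\mathcal{I}_{\mathcal{V}}\mathcal{I}_{\Gamma}\cdot\mathcal{O}_{Y_2}$ is invertible, but $\mathcal{I}_{\mathcal{W}}$ is generally strictly larger than $\mathcal{I}_{\mathcal{V}}\mathcal{I}_{\Gamma}$ along $F_2$, so you would have to work with actual generators of $\mathcal{I}_{\mathcal{W}}$ there --- precisely the computation you have not done, and the one where embedded components could cause failure.

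The paper avoids this local analysis entirely by a more global argument that you may want to compare with. Since the two-step blow-up $B$ is a projective birational morphism onto $X\times X^{[2,3]}$, it is itself the blow-up along \emph{some} closed subscheme $Y$ (Hartshorne, Theorem 7.17). Off $\Gamma\cap\mathcal{V}$ one sees $Y=\mathcal{W}$, so $Y$ and $\mathcal{W}$ have the same support and, $\mathcal{W}$ being reduced, $I_Y\subset I_{\mathcal{W}}$. The pullback $b^{-1}I_Y\cdot\mathcal{O}_B$ is already invertible by construction, and one then shows it is \emph{radical}: a nilpotent section vanishes off $b^{-1}(\Gamma\cap\mathcal{V})$ because $Y$ is regularly embedded there, hence vanishes everywhere by normality of $B$ and the codimension bound on $b^{-1}(\Gamma\cap\mathcal{V})$. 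Radicality forces $b^{-1}I_Y\cdot\mathcal{O}_B=b^{-1}I_{\mathcal{W}}\cdot\mathcal{O}_B$, so the invertible ideal you need is obtained without ever computing $\mathcal{I}_{\mathcal{W}}$ near $F_2$. If you want to salvage your route, you must either carry out the explicit computation of $\mathcal{I}_{\mathcal{W}}\cdot\mathcal{O}_{Y_2}$ in local coordinates over $F_2$, or import an argument of this radicality type; as it stands the proposal has a genuine gap at its central step.
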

 \begin{proof}
 Write $B :=\textrm{Bl}_{\widetilde{\Gamma}}\textrm{Bl}_{\mathcal{V}}(X \times X^{[2,3]}) $. By \cite[III, Theorem 7.17]{Hartshorne.77}, $B$ itself is the blow-up of $X \times X^{[2,3]}$ along some closed subscheme $Y \subset X \times X^{[2,3]}$. By construction, one has $$Y - \Gamma \cap \mathcal{V} \cong \mathcal{W} - \Gamma \cap \mathcal{V},$$ so the underlying support of $Y$ equals the support of $\mathcal{W}$. Let $b : \textrm{Bl}_{Y}(X \times X^{[2,3]}) \lra X \times X^{[2,3]}$ denote the blow-up morphism and wirte $I_Y$ and $I_{\mathcal{W}}$ for the ideal sheaves of $Y$ and $\mathcal{W}$ in $X \times X^{[2,3]}$. 
      On $B$ we have the inclusion of inverse images of ideals
      $$b^{-1}I_{Y}\cdot\mathcal{O}_{B} \subset b^{-1}I_{\mathcal{W}}\cdot\mathcal{O}_{B},$$ and by the agreement of supports their radicals coincide.

       Let $s$ be a nilpotent local section of inverse image ideal $b^{-1}I_Y\cdot \mathcal{O}_B$. Since the embedding $Y - \Gamma \cap \mathcal{V} \subset X \times X^{[2,3]}$ is a regular embedding of codimension $n$, the restriction of $s$ on the open subscheme $B - b^{-1}(\Gamma \cap \mathcal{V})$ is zero. As $B:=\textrm{Bl}_Y (X \times X^{[2,3]})$ is normal and the codimension of $b^{-1}(\Gamma \cap \mathcal{V}) \subset B$ is at least 2, $s$ is the zero section. Hence, the exceptional divisor of $b$ is reduced. So the ideal $b^{-1}I_{Y}\cdot\mathcal{O}_{B}$ is radical; hence the two inverse image ideals agree: $b^{-1}I_{Y}\cdot\mathcal{O}_{B} = b^{-1}I_{\mathcal{W}}\cdot\mathcal{O}_{B}$. Thus $b^{-1}I_{\mathcal{W}}\cdot\mathcal{O}_{B}$ is a line bundle on $B$. By the universal property of the blow-up, there exists a morphism $B \lra \textrm{Bl}_{\mathcal{W}}(X\times X^{[2,3]})$. Along the proof, it is shown that this morphism is small birational.   
 \end{proof}

  \begin{proof}[Proof of theorem \ref{main}]
   By the previous argument and lemma \ref{comparison between blow-ups}, we obtain a morphism 
   $$ \textrm{Bl}_{\widetilde{\Gamma}}\textrm{Bl}_{\mathcal{V}}(X \times X^{[2,3]})\lra X^{[1,3,4]},$$
   which is a small birational morphism. The source is a variety with rational singularities by proposition \ref{rational singularities of blow-up}, while the target is normal and Cohen-Macaulay. As in the  introduction(where we treated $\mathcal{Z}_3$), the target therefore has rational singularities. Consequently $X^{[3,4]}$ has rational singularities by \cite{Boutot.87}. Being Gorenstein, it follows that its singularities are canonical Gorenstein singularities by \cite{Elkik.81}.
  \end{proof}

 Finally, analogous statements holds for other nested Hilbert schemes; these follows by essentially the same arguments with only minor modifications.

\begin{cor} \label{cor}
    $X^{[1,2,3]}, X^{[2,3,4]}$ and $X^{[1,2,3,4]}$ have canonical Gorenstein singularities. $X^{[1,3,4]}$ has rational singularities.
\end{cor}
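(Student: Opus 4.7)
The rational singularity claim for $X^{[1,3,4]}$ is immediate from the proof of Theorem \ref{main}: the small birational morphism $\textrm{Bl}_{\widetilde{\Gamma}}\textrm{Bl}_{\mathcal{V}}(X \times X^{[2,3]}) \lra X^{[1,3,4]}$ with rational-singularity source, together with Kempf's criterion applied to the normal Cohen-Macaulay target, suffices. The three remaining canonical Gorenstein statements will be proved by the same fiber-product-plus-blow-up strategy used for $X^{[3,4]}$, with the indices shifted appropriately.

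For $X^{[2,3,4]}$ the plan is to write $X^{[2,3,4]} \cong X^{[2,3]} \times_{X^{[3]}} X^{[3,4]}$. Propositions \ref{main flat} and \ref{fiber lci}, combined with base change along the smooth morphism $X^{[2,3]} \lra X^{[3]}$, give a flat Gorenstein morphism $X^{[2,3,4]} \lra X^{[2,3]}$ over a smooth base, so $X^{[2,3,4]}$ is Gorenstein. Under the identification $X^{[2,3,4]} \cong \textrm{Bl}_{\mathcal{W}}(X \times X^{[2,3]})$, Lemma \ref{comparison between blow-ups} and Proposition \ref{rational singularities of blow-up} produce a small birational model with rational singularities, and Kempf's criterion then upgrades $X^{[2,3,4]}$ to rational singularities. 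Combined with the Gorenstein property, we obtain canonical Gorenstein.

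The case $X^{[1,2,3]}$ runs the same story one level down. Writing $X^{[1,2,3]} \cong X^{[1,2]} \times_{X^{[2]}} X^{[2,3]}$, the morphism $X^{[2,3]} \lra X^{[2]}$ is flat (miracle flatness: both source and target are smooth, and the fibers have constant dimension $n$) with fibers isomorphic to the blow-ups $\textrm{Bl}_{\xi_2} X$. These are smooth when $\xi_2$ is reduced, and carry a single ordinary double point (locally of the form $V(x_1^2 - x_2 u_1)$) when $\xi_2$ is non-reduced; in either case the fiber is canonical Gorenstein, in particular has rational singularities. Base change along the smooth $X^{[1,2]} \lra X^{[2]}$ yields a flat morphism $X^{[1,2,3]} \lra X^{[1,2]}$ over a smooth base with canonical Gorenstein fibers, and Elkik's theorems (\cite{Elkik.78}, \cite{Elkik.81}) deliver canonical Gorenstein singularities on $X^{[1,2,3]}$.

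Finally, for $X^{[1,2,3,4]} \cong X^{[1,2,3]} \times_{X^{[3]}} X^{[3,4]}$, base-changing the flat Gorenstein morphism $X^{[3,4]} \lra X^{[3]}$ yields a flat Gorenstein morphism over the now Gorenstein base $X^{[1,2,3]}$ (by the previous paragraph), so $X^{[1,2,3,4]}$ is Gorenstein. The hard part will be rational singularities: since $X^{[1,2,3]}$ is no longer smooth, Elkik's theorem does not apply directly. The plan is to repeat the two-step blow-up construction of Proposition \ref{rational singularities of blow-up} with $X^{[2,3]}$ replaced by $X^{[1,2,3]}$, producing a small birational model of $X^{[1,2,3,4]}$ whose fiberwise analysis mirrors that of Proposition \ref{rational singularities of blow-up}, and then apply Kempf's criterion. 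The principal technicality is verifying that the base-change descriptions of the blow-up centers $\mathcal{V}$, $\Gamma$ and $\mathcal{W}$ continue to behave well over the non-smooth base $X \times X^{[1,2,3]}$; this should follow from the Gorenstein and flatness properties already in place.
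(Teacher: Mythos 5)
Your proposal is correct and follows essentially the same route as the paper: the same fiber-product decompositions, Gorenstein-ness via flat Gorenstein base change over a smooth base, Elkik's theorems applied to the fibers $\textrm{Bl}_{\xi_2}X$ for $X^{[1,2,3]}$, the two-step blow-up plus Kempf's criterion for the rational singularities of $X^{[2,3,4]}$, and (like the paper, which disposes of it with ``by similar arguments'') only a sketch for $X^{[1,2,3,4]}$. One slip to correct: $\pi_2 : X^{[2,3]} \to X^{[3]}$ is not a smooth morphism (it is a generically finite birational-type contraction through $\mathcal{Z}_3$), but your argument only uses that flat Gorenstein morphisms are stable under arbitrary base change and that the base $X^{[2,3]}$ is a smooth variety, so nothing is lost.
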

\begin{proof}
 By base change, $X^{[1,2,3]} \lra X^{[1,2]}$ and $X^{[2,3,4]} \longrightarrow X^{[2,3]}$ are flat Gorenstein morphisms with fibers having rational singularities. Thus, $X^{[1,2,3]}$ and $X^{[2,3,4]}$ are Gorenstein varieties and hence have canonical singularities by the similar arguments.  $X^{[1,2,3,4]}$ also has canonical Gorenstein singularities by similar arguments.

 We already showed the rational singularities of $X^{[1,3,4]}$ during the proof of the main theorem.
\end{proof}

\section{Open question}

 We complete this paper with two questions. $(\mathbb{P}^3)^{[4]}$ is normal and Cohen-Macaulay by \cite{Katz.94}. Since $X^{[4]}$ is isomorphic with $(\mathbb{P}^3)^{[4]}$ in its \'{e}tale local chart when $X$ is a smooth $3$-fold, $X^{[4]}$ is also normal and Cohen-Macaulay. This leads to the following question.      

\begin{Q}
 Is $X^{[4]}$ normal and Cohen-Macaulay regardless of the dimension of $X$? Does it have rational singularities?
\end{Q}

If $\dim X > 2$, we can not apply the Hilbert-Burch theorem for the free resolution of $\mathcal{O}_{\mathcal{Z}_k}$(cf.\cite{Ellingsrud.Stromme} or \cite{Song.16}). Despite this limitation, it appears that $X^{[3]} ,X^{[4]}, \mathcal{Z}_3$ and $\mathcal{Z}_4$ have similar geometric properties in higher dimensional cases as seen in the case of surfaces. The following question comes naturally. 

\begin{Q}
    Does $\mathcal{Z}_4$ have rational but non $\mathbb{Q}$-Gorenstein singularities?
\end{Q}

\end{document}